\theoremstyle{plain}
\newtheorem{theorem}{Theorem}[section]
\newtheorem{lemma}[theorem]{Lemma}
\newtheorem{proposition}[theorem]{Proposition}
\theoremstyle{definition}
\newtheorem{definition-theorem}[theorem]{Definition-Theorem}
\theoremstyle{remark}
\def\cF{\mathcal{F}}
\def\cE{\mathcal{E}}
\def\cO{\mathcal{O}}
\def\cG{\mathcal{G}}
\def\R{\mathbb{R}}
\def\Q{\mathbb{Q}}
\def\Z{\mathbb{Z}}
\def\N{\mathbb{N}}
\def\P{\mathbb{P}}
\def\C{\mathbb{C}}
\def\>{\rangle}
\def\<{\langle}
\def\>{\rangle}
\def\Hom{\mathrm{Hom}}
\def\Spec{\mathrm{Spec}}
\def\deg{\mathrm{deg}}
\def\Pic{\mathrm{Pic}}
\def\rank{\mathrm{rank}}
\def\dim{\mathrm{dim}}
\def\Sing{\mathrm{Sing}}
\begin{document}

\title[Stable toric sheaves of low rank]
{A note on stable toric sheaves of low rank}

\author[C. Tipler]{Carl Tipler}
\address{Univ Brest, UMR CNRS 6205, Laboratoire de Math\'ematiques de Bretagne Atlantique, France}
\email{carl.tipler@univ-brest.fr}

\date{\today}

 \begin{abstract}
  Kaneyama and Klyachko have shown that any torus equivariant vector bundle of rank $r$ over $\C\P^n$ splits if $r < n$. In particular, any such bundle is not slope stable. In contrast, we provide explicit examples of stable equivariant reflexive sheaves of rank $r$ on any polarised toric variety $(X,L)$, for $2\leq r< \dim(X)+\rank(\Pic(X))$, and show that the dimension of their singular locus is strictly bounded by $n-r$.
\end{abstract}

\maketitle

 
\section{Introduction}
\label{sec:intro}
In his study of low codimension subvarieties of $\C\P^n$, Hartshorne conjectured that for $n\geq 7$, any rank $2$ vector bundle on 
$\C\P^n$ should split (see \cite[Conjecture 6.3]{Har74}). 
While this conjecture still remains open in general, a lot of progress have been made in the equivariant context. Considering 
$\C\P^n$ as a toric variety, Kaneyama \cite{Kan} and Klyachko \cite{Kl} have shown that any torus equivariant vector bundle of rank $r<n$ over $\C\P^n$ splits as a direct sum of line bundles. More recently, Ilten and S\"uss extended this result for bundles equivariant with respect to a lower rank torus action \cite{IlSu}. As split vector bundles are not simple, they are in particular not slope stable (see Section \ref{sec:definitions} for precise definitions). 

Reflexive sheaves can be considered as midly singular versions of locally free ones \cite{Har80}. For a reflexive sheaf $\cE$ over a complex variety $X$, we denote by $\Sing(\cE)\subset X$ the singular locus of $\cE$, that is the complement in $X$ of the open set where $\cE$ is locally free. In contrast with the previously cited results, we have the following theorem.

\begin{theorem}
 \label{theo:intro}
 Let $(X,L)$ be a smooth polarised toric variety of dimension $n$ and  Picard rank $p$. Then, for any $2\leq r < n+p$, there is an equivariant stable reflexive sheaf $\cE_r$ of rank $r$ on $(X,L)$. Moreover, if $r < n$, its singular locus satisfies $\dim(\mathrm{Sing}(\cE_r)) < n - r $ and if $r\geq n$, $\cE_r$ is locally free.
\end{theorem}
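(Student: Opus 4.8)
The plan is to work entirely on the combinatorial side of Klyachko's classification of torus-equivariant reflexive sheaves on the smooth toric variety $X$ with fan $\Sigma$ (see \cite{Kl}): such a sheaf $\cE$ of rank $r$ is the same datum as a vector space $E$ with $\dim_\C E=r$ together with, for each ray $\rho\in\Sigma(1)$, a decreasing, exhaustive and separated filtration $\{E^\rho(j)\}_{j\in\Z}$ of $E$ by subspaces. Under this dictionary: (i) $c_1(\cE)=\sum_{\rho}\big(\sum_j j\,(\dim E^\rho(j)-\dim E^\rho(j+1))\big)D_\rho$, where $D_\rho$ is the prime divisor of $\rho$; (ii) the saturated equivariant subsheaves of $\cE$ are exactly the $\cF_F$ attached to subspaces $F\subseteq E$ with induced filtrations $F^\rho(j)=F\cap E^\rho(j)$, and since slope (semi)stability may be tested on equivariant saturated subsheaves alone, $\cE$ is stable iff $\mu(\cF_F)<\mu(\cE)$ for all $0\neq F\subsetneq E$; (iii) $\cE$ is locally free on $U_\sigma$ iff the family $\{E^\rho(\bullet)\}_{\rho\leq\sigma}$ admits a common adapted basis, and otherwise the non-locally-free points of $U_\sigma$ form a union of orbit closures $V(\tau)$ with $\sigma$ a face of $\tau$. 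I will also use that, $X$ being smooth and complete, $|\Sigma(1)|=n+p$ and every cone is a face of an $n$-dimensional simplicial cone.

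The sheaf $\cE_r$ is then built from lines. Write $\Sigma(1)=\{\rho_1,\dots,\rho_d\}$ with $d=n+p\geq r+1$, put $E=\C^r$, and pick lines $L_1,\dots,L_d\subseteq E$ in linear general position, i.e. any $r$ of them linearly independent. Set $\delta_i:=\deg_L(D_i)=D_i\cdot L^{n-1}$; since $D_i$ is an effective prime divisor of dimension $n-1\geq 1$ and $L$ is ample, $L|_{D_i}$ is ample and $\delta_i=(L|_{D_i})^{n-1}>0$. Fix an integer $M>r\max_i\delta_i$ and set $N_i:=\lceil M/\delta_i\rceil$, so that $M\leq\delta_iN_i<M+\max_j\delta_j$ — this rebalances the a priori unequal weights $\delta_i$. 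Define $\cE_r$ by the filtration data
\[
E^{\rho_i}(j)=E\ (j\leq 0),\qquad E^{\rho_i}(j)=L_i\ (1\leq j\leq N_i),\qquad E^{\rho_i}(j)=0\ (j>N_i).
\]
By Klyachko's correspondence $\cE_r$ is an equivariant reflexive sheaf of rank $r$; tensoring by a line bundle only shifts the jumps of these filtrations, affecting neither stability nor the locally free locus, so the placement of the jumps is immaterial below.

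For stability, fix $F\subseteq E$ with $k:=\dim F$, $1\leq k\leq r-1$. At $\rho_i$ the function $j\mapsto\dim(F\cap E^{\rho_i}(j))$ drops by $k-\dim(F\cap L_i)$ at $j=1$ and by $\dim(F\cap L_i)$ at $j=N_i+1$, so its contribution to the degree of $\cF_F$ is $\delta_i\big(k+N_i\dim(F\cap L_i)\big)$, which equals $\delta_i k$ if $L_i\not\subseteq F$ and $\delta_i k+\delta_iN_i$ if $L_i\subseteq F$. Summing over $i$,
\[
\mu(\cF_F)=\sum_i\delta_i+\frac1k\sum_{i:\,L_i\subseteq F}\delta_iN_i,\qquad \mu(\cE_r)=\sum_i\delta_i+\frac1r\sum_i\delta_iN_i,
\]
so stability is equivalent to $\tfrac1k\sum_{L_i\subseteq F}\delta_iN_i<\tfrac1r\sum_i\delta_iN_i$ for every such $F$. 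By general position at most $k$ of the $L_i$ lie in $F$, so the left side is at most the average of the $k$ largest $\delta_iN_i$, hence $<M+\max_j\delta_j$; meanwhile $\tfrac1r\sum_i\delta_iN_i\geq\tfrac dr M$, and $\tfrac dr M-(M+\max_j\delta_j)\geq\tfrac Mr-\max_j\delta_j>0$ since $d\geq r+1$ and $M>r\max_j\delta_j$. Thus the strict inequality holds and $\cE_r$ is stable.

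Finally, for a cone $\sigma$ the only subspaces occurring in the filtrations of the rays $\rho_i\leq\sigma$ are $E$, the lines $L_i$ and $0$, so a common adapted basis exists iff the lines $\{L_i:\rho_i\leq\sigma\}$ are independent, which by general position happens iff $\dim\sigma\leq r$ (a smooth cone having exactly $\dim\sigma$ rays). Hence $\cE_r$ is locally free on $\bigcup_{\dim\sigma\leq r}U_\sigma$, and $\Sing(\cE_r)=\bigcup_{\dim\sigma>r}O_\sigma$, whose closure is $\bigcup_{\dim\sigma=r+1}V(\sigma)$. If $r\geq n$ no cone has more than $n\leq r$ rays, so $\Sing(\cE_r)=\emptyset$ and $\cE_r$ is locally free; if $r<n$ then $\dim(\Sing(\cE_r))=n-(r+1)<n-r$, as claimed. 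The one genuinely delicate point is the bookkeeping in Klyachko's dictionary — the sign in $c_1$, the reduction of (semi)stability to equivariant subsheaves, and the local-freeness criterion — together with the twofold use of "general position" in the stability estimate and in the singular-locus computation; the stability inequality itself becomes elementary arithmetic once the weights $N_i$ are introduced to absorb the imbalance of the $\delta_i$, which is possible precisely because each $\delta_i$ is positive.
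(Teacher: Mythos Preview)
Your proof is correct and follows essentially the same strategy as the paper: build $\cE_r$ from one line per ray of $\Sigma$, chosen in linear general position in $\C^r$, rescale the filtration jumps to compensate for the unequal degrees $\deg_L(D_\rho)$, then verify stability via the equivariant subsheaf criterion and bound the singular locus via Klyachko's compatibility condition on cones of dimension at most $r$. The only (cosmetic) difference is in the rescaling: you take $N_i=\lceil M/\delta_i\rceil$ with $M>r\max_j\delta_j$ so that the products $\delta_iN_i$ are only approximately equal and then absorb the error, whereas the paper sets $m_\rho=\prod_{\rho'\neq\rho}\deg_L(D_{\rho'})$ so that $m_\rho\deg_L(D_\rho)$ is exactly the same constant for every $\rho$, which makes the final inequality a one-liner.
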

Allowing for singularities provides a much greater flexibility in the constructions of (equivariant) sheaves, and the above result motivates the following question : is there a lower bound on the dimension of the singular locus of a stable equivariant reflexive sheaf of low rank on a toric variety? In Proposition \ref{prop:optimalbound}, we show that $\dim(\Sing(\cE))=n-3 $ for any rank $2$ equivariant stable sheaf $\cE$ on $\C\P^n$, $n\geq 3$. Thus in that case the bound from Theorem \ref{theo:intro} is actually 'optimal', but also the worst bound one could hope for, given that the singular locus of a reflexive sheaf on a smooth variety is always of codimension greater or equal to $3$. On the other hand, Dasgupta, Dey and Khan provided examples of rank $2$ stable equivariant bundles over specific polarised Bott towers \cite[Proposition 6.1.5]{DDK}. Thus, if such a lower bound existed, it would depend on invariants of $(X,L)$. We believe that it would be interesting to understand better the relationship between stability and singularities for (equivariant) reflexive sheaves over (toric) varieties, and this note is a first step in that direction.

\subsection*{Acknowledgments}  
The author would like to thank Achim Napame for his careful reading of the first version of the paper and his comments. The author is partially supported by the grants MARGE ANR-21-CE40-0011 and BRIDGES ANR--FAPESP ANR-21-CE40-0017.


\section{Background}
\label{sec:definitions}

Let $X$ be a smooth and complete toric variety of dimension $n$ over $\C$. We will use the standard notations from toric geometry, following \cite{CLS}. In particular, we denote by $T_N=N\otimes_\Z \C^*$ the torus of $X$, $N$ the rank $n$ lattice of its one-parameter subgroups, $M=\Hom_\Z(N,\Z)$ its character lattice and $\Sigma$ its fan of strongly convex rational polyhedral cones in $N_\R=N\otimes_\Z \R$ (see \cite[Chapter 3]{CLS}). 
The variety $X$ is then covered by the $T_N$-invariant affine varieties $U_\sigma=\Spec(\C[M\cap\sigma^\vee])$, for $\sigma\in \Sigma$.
\subsection{Equivariant sheaves}
Let $\alpha : T_N \times X \to X$,  $\pi_1 : T_N\times X \to T_N$ and $\pi_2 : T_N \times X \to X$ be the $T_N$-action, the projection on $T_N$ and the projection on $X$ respectively. 
A coherent sheaf $\cF$ on $X$ is {\it $T_N$-equivariant} (or {\it equivariant} for short) if there is an isomorphism
$\varphi : \alpha^*\cF \to \pi_2^*\cF$
satisfying some cocycle condition (see for example \cite[Section 5]{perl}). Klyachko provided a simple description of equivariant reflexive sheaves on toric varieties \cite{Kl} (see also \cite{perl}). Any such sheaf is uniquely described by a {\it family of filtrations} that we denote $(E,E^\rho(i))_{\rho\in\Sigma(1),i\in\Z}$. Here, $E$ is a finite dimensional complex vector space of dimension $\rank(\cE)$, and for each ray $\rho\in\Sigma(1)$, $(E^\rho(i))_{i\in\Z}$ is a bounded {\it increasing} filtration of $E$ (note here that we will use increasing filtrations as in \cite{perl}, rather than decreasing ones as in \cite{Kl}). Then, one recovers an equivariant reflexive sheaf $\cE$ by setting for each $\sigma\in\Sigma$ :
\begin{equation*}
  \label{eq:sheaf from family of filtrations}
  \Gamma(U_{\sigma}, \cE):=\bigoplus_{m\in M} \bigcap_{\rho\in\sigma(1)} E^\rho(\langle m,u_\rho\rangle)\otimes \chi^m
 \end{equation*}
 where $u_\rho\in N$ is the primitive generator of $\rho$ and $\langle \cdot,\cdot\rangle$ the duality pairing. Finally, from \cite[Theorem 2.2.1]{Kl} (or \cite[Section 5]{perl}),  $\cE$ will be locally free  if and only if the family of filtrations $(E^\rho(\bullet))_{\rho\in\Sigma(1)}$ satisfies Klyachko's compatibility criterion, namely that for each $\sigma\in\Sigma$, there exists a decomposition 
$$
E= \bigoplus_{[m]\in M/(M\cap\sigma^\perp)} E^\sigma_{[m]}
$$
such that for each ray  
$\rho\in\sigma(1)$ in $\sigma$ :
$$
E^\rho(i)=\bigoplus_{\langle m,u_\rho\rangle\leq i}  E^\sigma_{[m]}.
$$
\subsection{Slope stability}
Assume now that $L\to X$ is an ample line bundle on $X$. Recall that a reflexive sheaf $\cE$ on $X$ is said to be {\it slope stable} if for any coherent and saturated subsheaf $\cF\subset \cE$ with $\rank(\cF)<\rank(\cE)$, one has
$$
\mu_L(\cF)<\mu_L(\cE),
$$
where for any coherent torsion-free sheaf $\cG$, the {\it slope} $\mu_L(\cG)$ is the intersection number
$$
\mu_L(\cG)=\frac{c_1(\cG)\cdot L^{n-1}}{\rank(\cG)}\in\Q.
$$
If $\cE$ is equivariant with associated family of filtrations $(E,E^\rho(\bullet))_{\rho\in\Sigma(1)}$, from Klyachko's formula for the first Chern class (see e.g. \cite[Corollary 2.18]{ClaTip}) we obtain  
\begin{equation}
 \label{eq:slope}
 \mu_L(\cE)=-\frac{1}{\rank(\cE)}\sum_{\rho\in\Sigma(1)} \iota_\rho(\cE)\, \deg_L(D_\rho),
\end{equation}
 where $\deg_L(D_\rho)$ is the degree with respect to $L$ of the divisor $D_\rho$ associated to the ray $\rho\in\Sigma$, and where
 $$
 \iota_\rho(\cE):=\sum_{i\in\Z} i \left(\dim(E^\rho(i))-\dim (E^\rho(i-1))\right).
 $$
Moreover, in that equivariant case, from Kool's work \cite[Proposition 4.13]{Koo} (see also \cite[Proposition 2.3]{HNS}), to check stability for $\cE$, it is enough to compare slopes with equivariant and saturated reflexive subsheaves. By \cite[Lemma 2.14]{NaTip}, any such subsheaf is associated to a family of filtrations of the form $(F, F\cap E^\rho(i))_{\rho\in\Sigma(1),i\in\Z}$ for some vector subspace $F\subsetneq E$. To summarize, we have
\begin{proposition}
\label{prop:stability equiv subsheaves}
The equivariant reflexive sheaf associated to the family of filtrations $(E,E^\rho(i))_{\rho\in\Sigma(1), i\in\Z}$ is slope stable if and only if for any vector subspace $F\subsetneq E$, we have 
$$
\frac{1}{\dim(F)}\sum_{\rho\in\Sigma(1)} \iota_\rho(F)\, \deg_L(D_\rho) > \frac{1}{\dim(E)}\sum_{\rho\in\Sigma(1)} \iota_\rho(\cE)\, \deg_L(D_\rho),
$$
where 
$$
 \iota_\rho(F):=\displaystyle\sum_{i\in\Z} i \left(\dim(F\cap E^\rho(i))-\dim (F\cap E^\rho(i-1))\right).
$$
 \end{proposition}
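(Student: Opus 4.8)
The plan is to deduce the proposition directly from the three ingredients recalled above: Klyachko's first Chern class formula \eqref{eq:slope}, Kool's reduction of slope stability to equivariant subsheaves, and the classification of equivariant saturated reflexive subsheaves in \cite[Lemma 2.14]{NaTip}. First I would invoke \cite[Proposition 4.13]{Koo} (or \cite[Proposition 2.3]{HNS}) to replace the defining condition of slope stability by its equivariant analogue: $\cE$ is slope stable if and only if $\mu_L(\cF)<\mu_L(\cE)$ for every equivariant saturated reflexive subsheaf $\cF\subsetneq\cE$ of strictly smaller rank. Then, using \cite[Lemma 2.14]{NaTip}, I would parametrise these subsheaves: each arises from a family of filtrations $(F,F\cap E^\rho(i))_{\rho\in\Sigma(1),i\in\Z}$ for a unique proper subspace $F\subsetneq E$, and conversely every such $F$ produces one, with $\dim F<\dim E=\rank(\cE)$; the degenerate choice $F=0$ gives the zero sheaf, which I would simply exclude. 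After this reduction it only remains to show, for each nonzero proper $F\subsetneq E$, that the inequality $\mu_L(\cF)<\mu_L(\cE)$ for the corresponding $\cF$ is equivalent to the inequality in the statement.

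For that equivalence I would apply \eqref{eq:slope} to $\cF$, which is the equivariant reflexive sheaf of rank $\dim F$ attached to $(F,F\cap E^\rho(\bullet))$: the formula gives
$$\mu_L(\cF)=-\frac{1}{\dim F}\sum_{\rho\in\Sigma(1)}\iota_\rho(F)\,\deg_L(D_\rho),\qquad \iota_\rho(F)=\sum_{i\in\Z}i\bigl(\dim(F\cap E^\rho(i))-\dim(F\cap E^\rho(i-1))\bigr),$$
since $\iota_\rho(F)$ is nothing but the integer $\iota_\rho$ of \eqref{eq:slope} evaluated on the filtration $(F\cap E^\rho(i))_i$ of $F$, which is again bounded and exhaustive, being induced from $(E^\rho(i))_i$, so that these quantities make sense. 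Writing \eqref{eq:slope} for both $\cF$ and $\cE$, the inequality $\mu_L(\cF)<\mu_L(\cE)$ becomes
$$-\frac{1}{\dim F}\sum_{\rho\in\Sigma(1)}\iota_\rho(F)\,\deg_L(D_\rho)<-\frac{1}{\dim E}\sum_{\rho\in\Sigma(1)}\iota_\rho(\cE)\,\deg_L(D_\rho),$$
and multiplying through by $-1$ reverses it into precisely the asserted inequality. Letting $F$ range over all nonzero proper subspaces of $E$ then finishes the argument.

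I do not expect a serious obstacle, since the mathematical content is entirely contained in the cited results; the work is to line up conventions correctly. The one point that genuinely requires care is the sign: because we follow \cite{perl} and use \emph{increasing} filtrations, \eqref{eq:slope} carries a minus sign, and it is exactly this minus sign that converts the slope comparison $\mu_L(\cF)<\mu_L(\cE)$ into the ``$>$'' appearing in the statement, so one must track it through every line. The second point worth stating explicitly — but which I would cite from \cite[Lemma 2.14]{NaTip} rather than reprove — is that the sheaf attached to $(F,F\cap E^\rho(\bullet))$ is a \emph{saturated} subsheaf of $\cE$ (the inclusion $F\hookrightarrow E$ induces $\cF\hookrightarrow\cE$ with torsion-free cokernel) and that these exhaust all equivariant saturated reflexive subsheaves; this is the only substantial input, and it is already available to us.
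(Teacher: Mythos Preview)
Your proposal is correct and follows exactly the paper's own approach: the paper does not give a standalone proof of this proposition but presents it as a summary of the three cited ingredients (Kool's reduction to equivariant saturated subsheaves, the classification in \cite[Lemma 2.14]{NaTip}, and the slope formula \eqref{eq:slope}), which is precisely what you spell out in detail. Your explicit tracking of the sign convention and the exclusion of $F=0$ are useful clarifications that the paper leaves implicit.
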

\section{The examples}
\label{sec:examples}
Once the above settled, the proof of Theorem \ref{theo:intro} is fairly simple and relies on elementary observations. To produce the examples, we will need the following lemma.
\begin{lemma}
 \label{lem:independentvectors}
 Let $(r,m)\in \N^2$ with $r\geq 2$ and $m\geq 1$. There exists $(v_i)_{1\leq i\leq m} \in(\C^r)^m$ such that for any $d\leq \min\lbrace r, m \rbrace$ and any $\lbrace i_1,\ldots, i_d\rbrace \subset \lbrace 1,2,\ldots,m\rbrace$, the vectors $(v_{i_1}, \ldots , v_{i_d})$ are linearly independent.
\end{lemma}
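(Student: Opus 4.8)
The statement asks for $m$ vectors in $\C^r$ in "general position": every subset of size at most $\min\{r,m\}$ is linearly independent. The plan is to exhibit such a configuration explicitly using a Vandermonde-type construction. Pick $m$ pairwise distinct scalars $t_1,\ldots,t_m\in\C$ (possible since $\C$ is infinite), and set
$$
v_i=(1,t_i,t_i^2,\ldots,t_i^{r-1})\in\C^r,\qquad 1\le i\le m.
$$
Given any $d\le\min\{r,m\}$ and indices $i_1<\cdots<i_d$, the $d$ vectors $v_{i_1},\ldots,v_{i_d}$ span a $d\times r$ matrix each of whose maximal ($d\times d$) minors is a Vandermonde determinant in a subset of the $t_{i_j}$'s. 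At least one such minor — the one using the first $d$ columns — equals $\prod_{1\le a<b\le d}(t_{i_b}-t_{i_a})\ne0$ because the $t_i$ are distinct. Hence the matrix has rank $d$, i.e. the vectors are linearly independent.

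The bulk of the argument is just the standard fact that the Vandermonde determinant $\det\big((t_{i_a})^{b-1}\big)_{1\le a,b\le d}$ is nonzero for distinct nodes, which I would invoke rather than reprove. The only point requiring a word of care is the constraint $d\le r$: it guarantees that each $v_i$ has at least $d$ coordinates, so that a $d\times d$ submatrix (using columns $1,\ldots,d$) actually exists; if $d$ could exceed $r$ the claim would be false for trivial dimension reasons, which is exactly why the hypothesis $d\le\min\{r,m\}$ appears. The condition $d\le m$ is automatic from choosing $d$ indices among $m$.

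I do not anticipate a genuine obstacle here; this is an elementary lemma. The only mild subtlety is ensuring the chosen $d\times d$ minor is literally Vandermonde in the right variables — taking the first $d$ columns makes rows indexed by $t_{i_a}$ and columns by powers $0,\ldots,d-1$, which is precisely the Vandermonde shape, so no reindexing trickery is needed. An alternative, even softer, proof would note that "linearly independent" is a Zariski-open condition on $(\C^r)^m$, that each of the finitely many conditions (one per $d$-subset) is nonempty and open, hence their intersection is a nonempty open set, so a generic tuple works; but the explicit Vandermonde choice is cleaner and will be more useful if the later constructions need concrete vectors.
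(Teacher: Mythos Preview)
Your proof is correct. The Vandermonde construction does exactly what is needed: for distinct $t_1,\ldots,t_m$ the $d\times d$ minor on the first $d$ columns of the matrix with rows $v_{i_1},\ldots,v_{i_d}$ is the Vandermonde determinant $\prod_{a<b}(t_{i_b}-t_{i_a})\neq 0$, so any $d\le\min\{r,m\}$ of the vectors are independent.

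The paper takes a different route. It argues by induction on $m$: for $m\le r$ the claim is trivial, and for the inductive step one chooses $v_{m+1}$ outside the finite union of hyperplanes $\{\det(v_{i_1},\ldots,v_{i_{r-1}},x)=0\}$ indexed by the $(r-1)$-subsets of $\{1,\ldots,m\}$. This is essentially the ``Zariski-open, hence generic'' alternative you sketched at the end. Your explicit construction has the virtue of producing a concrete tuple in one stroke and avoiding induction; the paper's argument is slightly softer and makes transparent that the desired configurations are generic. In the application (the proof of Theorem~\ref{theo:intro}) only the abstract independence property of the $v_\rho$ is used, so neither approach carries an advantage downstream, and your remark that concreteness might help later turns out not to be needed here.
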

\begin{proof}
 If  $m\leq r$ the statement is obvious. For $m\geq r$, we use induction on $m$. Assuming that we have $(v_i)_{1\leq i\leq m} \in(\C^r)^m$ satisfying the conclusion of the lemma, we can pick $v_{m+1}\in\C^r$ in the complementary of the finite union of hyperplanes defined by the equations 
 $$
 \det(v_{i_1},\ldots,v_{i_{r-1}}, x)=0
 $$
 where the set of indices $\lbrace i_1,\ldots , i_{r-1}\rbrace$ runs through all subsets of $  \lbrace 1, \ldots, m \rbrace$ with $r-1$ elements.
\end{proof}
\begin{proof}[Proof of Theorem \ref{theo:intro}]
 Let $r\in\N$ with $1 < r < n+p$, where we recall that $p=\rank(\Pic(X))$. From \cite[Theorem 4.2.1]{CLS}, $n+p=\vert \Sigma(1)\vert$ is the number of rays in $\Sigma$, so we have 
 $$
 2\leq r \leq \vert \Sigma(1)\vert-1.
 $$
 By Lemma \ref{lem:independentvectors}, we can fix $m=\vert \Sigma(1)\vert $ vectors $(v_\rho)_{\rho\in\Sigma(1)}$ in $\C^r$ such that any $d$-dimensional subspace $F\subset \C^r$ contains at most $d$ elements from $\lbrace v_\rho,\: \rho\in\Sigma(1) \rbrace$. We also set for $\rho\in\Sigma(1)$ :
 $$
 m_\rho = \displaystyle \underset{\rho'\neq \rho}{\Pi} \deg_L(D_{\rho'})\in\N^*,
 $$
so that there is a positive constant $c\in\N^*$ such that for any $\rho\in\Sigma(1)$,
$$
m_\rho\, \deg_L(D_\rho) =c.
$$
 Now, define $\cE_r$ to be the equivariant reflexive sheaf associated to the family of filtrations $(\C^r,E^\rho(i))_{\rho\in\Sigma(1),i\in\Z}$ with 
 $$
 E^\rho(i)=\left\{ 
 \begin{array}{ccc}
  \lbrace 0 \rbrace & \mathrm{ if } & i < 0 \\
  \C\cdot v_\rho & \mathrm{ if } & 0\leq  i < m_\rho \\
  \C^r & \mathrm{ if } & m_\rho \leq i . 
 \end{array}
\right. 
 $$
 By construction, using formula (\ref{eq:slope}), we have 
 $$
 \begin{array}{ccc}
- \mu_L(\cE_r) & = & \displaystyle\frac{1}{r}\sum_{\rho\in\Sigma(1)} (r-1)\,m_\rho\, \deg_L(D_\rho)\\
 & = & \displaystyle c\, \frac{r-1}{r}\vert \Sigma(1)\vert.
 \end{array}
 $$
 On the other hand, for a $d$-dimensional subspace $F\subsetneq \C^r$, we compute 
 $$
 \iota_\rho(F)\, \deg_L(D_\rho)=\left\{ \begin{array}{ccc}
                  c\,(d-1)& \mathrm{ if } & v_\rho \in F \\ 
                   c\,d & \mathrm{ if } & v_\rho \notin F
                   \end{array}
\right. 
$$
and then 
$$
 \begin{array}{ccc}
\displaystyle\frac{1}{\dim(F)}\sum_{\rho\in\Sigma(1)} \iota_\rho(F)\, \deg_L(D_\rho) & = & \displaystyle c\,\vert \Sigma(1) \vert - \sum_{v_\rho\in F} \frac{c}{d} \\
\displaystyle& \geq &  c\,\vert\Sigma(1)\vert - c
 \end{array}
$$
where the last inequality comes from the fact that $F$ contains at most $d$ elements amongst $(v_\rho)_{\rho\in\Sigma(1)}$. As $r < \vert \Sigma(1) \vert=n+p$, we then conclude that 
$$
- \mu_L(\cE_r) < \displaystyle\frac{1}{\dim(F)}\sum_{\rho\in\Sigma(1)} \iota_\rho(F)\, \deg_L(D_\rho)
$$
and by Proposition \ref{prop:stability equiv subsheaves}, $\cE_r$ is slope stable. We now turn to the singular locus $\Sing(\cE_r)\subset X$. Assume first that $r\leq n$. Let $\sigma\in \Sigma(r)$ a $r$-dimensional cone in $\Sigma$. As $X$ is smooth, we can find an isomorphism $N\simeq \Z^n$ such that the elements $(e_1,\ldots, e_r)$ from the canonical basis of $\Z^n$ span the rays $(\rho_1,\ldots, \rho_r)$ of $\sigma(1)$. We can then identify 
$$
M/(M\cap\sigma^\perp)\simeq \Z\cdot e_1^*\oplus\ldots \oplus \Z\cdot e_r^*
$$
for $(e_i^*)_{1\leq i\leq n}$ the dual canonical basis. For $j\in [\![ 1, r ]\!]$, define $E^\sigma_j$ to be the vector space $\C\cdot v_{\rho_j}\subset \C^r$ together with the weight $\mu^j$-action of $T_N$, where
$$
\mu^j:=\sum_{1\leq i\leq r, \, i\neq j} m_{\rho_i}\,e_i^*\in \Z\cdot e_1^*\oplus\ldots \oplus \Z\cdot e_r^*.
$$
Then, by choice of the $(v_\rho)_{\rho\in\Sigma(1)}$, we have 
$$
\C^r=\bigoplus_{j=1}^r E^\sigma_j,
$$
and by choice of the weights $(\mu^j)_{1\leq j\leq r}$, for any $\rho_k\in \sigma(1)$, we infer that 
$$
E^{\rho_k}(i)= \bigoplus_{\mu^j_k\leq i}  E^\sigma_j
$$
where $\mu^j_k$ stands for the $k$-th coordinate of $\mu^j$ in $\bigoplus_{i=1}^n \Z\cdot e_i^*$. Thus, Klyachko's criterion for locally freeness is satisfied by $(\cE_r)_{\vert U_\sigma}$, the restriction of $\cE_r$  to $U_\sigma$. Hence, we have 
$$
\Sing(\cE_r)\subset  X \setminus \bigcup_{\sigma\in\Sigma(r)} U_\sigma.
$$
By the orbit-cone correspondence (see \cite[Theorem 3.2.6]{CLS}), we deduce that 
$$
\Sing(\cE_r)\subset \bigcup_{\dim(\tau)> r} \cO(\tau)
$$
where $\cO(\tau)$ is the $(n-\dim(\tau))$-dimensional orbit associated to $\tau\in\Sigma$. Hence, 
$$
\dim(\Sing(\cE_r)) < n-r.
$$
The case for $r > n$ can be dealt with a similar argument, and this concludes the proof.
\end{proof}
In view of Hartshorne's conjecture, it is natural to try to build rank $2$ stable reflexive sheaves  with a singular locus of the smallest possible dimension. Unfortunately, on $\C\P^n$, the bound $\dim(\Sing(\cE_2))\leq n-3$ is actually optimal in the torus equivariant case.
\begin{proposition}
 \label{prop:optimalbound}
 Let $\cE$ be a rank $2$ stable equivariant reflexive sheaf on $\C\P^n$, $n\geq 3$. Then $\dim(\Sing(\cE))=n-3$.
\end{proposition}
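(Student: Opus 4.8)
The goal is to show that for a rank $2$ stable equivariant reflexive sheaf $\cE$ on $\C\P^n$ ($n\geq 3$), the singular locus has dimension exactly $n-3$. The upper bound $\dim(\Sing(\cE))\leq n-3$ is automatic: $\cE$ is a reflexive sheaf on a smooth variety, so $\codim(\Sing(\cE))\geq 3$. Hence the real content is the \emph{lower} bound $\dim(\Sing(\cE))\geq n-3$, equivalently: $\Sing(\cE)\neq\emptyset$, i.e.\ \emph{no} stable equivariant rank $2$ reflexive sheaf on $\C\P^n$ is locally free. (Indeed, if $\Sing(\cE)$ is nonempty, orbit--cone correspondence shows it is a union of orbit closures $\cO(\tau)$ with $\dim\tau\geq 3$; on $\C\P^n$ with its standard fan the minimal such orbit closures have dimension $n-3$, and once $\Sing(\cE)\neq\emptyset$ it must contain at least one such, forcing $\dim(\Sing(\cE))\geq n-3$.) So the plan reduces to proving: a rank $2$ equivariant locally free sheaf on $\C\P^n$ is never slope stable.

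First I would set up the combinatorial data. The fan of $\C\P^n$ has $n+1$ rays $\rho_0,\dots,\rho_n$ with primitive generators $u_0,\dots,u_n$ satisfying $u_0+\cdots+u_n=0$, and $\deg_L(D_{\rho})$ is the same positive integer $\ell$ for every $\rho$ (since $\Pic(\C\P^n)=\Z$ and all the $D_\rho$ are linearly equivalent). A rank $2$ equivariant reflexive sheaf is given by a family $(\C^2, E^{\rho_j}(\bullet))_{0\leq j\leq n}$ of bounded increasing filtrations of $\C^2$; each filtration jumps either once (from $0$ to $\C^2$, at some integer $a_j$) or twice (from $0$ to a line $L_j\subset\C^2$ at some $a_j$, then to $\C^2$ at some $b_j>a_j$). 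Twisting by a line bundle lets me normalize. The key is Klyachko's local-freeness criterion: over the maximal cone $\sigma_j$ spanned by all rays except $\rho_j$ (these are the maximal cones of $\C\P^n$), local freeness forces a simultaneous "compatible decomposition" of $\C^2$ with respect to the $n$ filtrations $(E^{\rho_k})_{k\neq j}$. Since $\dim\C^2=2$, such a decomposition is either trivial ($\C^2$ itself in a single graded piece — all those filtrations then jump simultaneously) or a splitting $\C^2=\ell'\oplus\ell''$ into two lines compatible with all $n$ filtrations in that cone.

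The heart of the argument is then a rigidity statement: running this over all $n+1$ maximal cones, I claim the filtration lines $L_j$ (for those $\rho_j$ whose filtration jumps twice) can only take very few values, and in fact the compatible decompositions glue to a global splitting $\C^2 = \ell'\oplus\ell''$ compatible with \emph{every} filtration $E^{\rho_j}(\bullet)$, $0\leq j\leq n$. That is exactly the statement that $\cE$ splits as a sum of two equivariant line bundles $\cL_1\oplus\cL_2$ — which is Kaneyama's and Klyachko's theorem for $r=2<n$, but here I want the direct filtration-theoretic derivation. Once $\cE\cong \cL_1\oplus\cL_2$, it is decomposable hence not simple, hence not slope stable. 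Concretely: either $\mu_L(\cL_1)\geq \mu_L(\cE)$ or $\mu_L(\cL_2)\geq\mu_L(\cE)$, and the corresponding line subbundle is a saturated subsheaf violating the strict slope inequality of Proposition~\ref{prop:stability equiv subsheaves}.

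The main obstacle is the gluing/rigidity step — showing the locally-defined compatible line decompositions on the various $U_{\sigma_j}$ are forced to agree and produce one global splitting. The argument I expect to use: if some filtration $E^{\rho_{j_0}}$ jumps twice with line $L_{j_0}$, then in every maximal cone $\sigma_j$ containing $\rho_{j_0}$ (i.e.\ for all $j\neq j_0$), the compatible decomposition must refine that filtration, hence must be a genuine line splitting and one of its two lines is $L_{j_0}$; so all these decompositions share the common line $L_{j_0}$, and the complementary line is then also forced to be common (in $\C^2$, a line together with a compatible complement is unique once we also know it must be compatible with the remaining filtrations). If \emph{no} filtration jumps twice, every $E^{\rho_j}(\bullet)$ is a one-step filtration and $\cE$ is already a twist-translate of $\cO^{\oplus 2}$, trivially non-stable. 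Handling the case distinctions cleanly — particularly ensuring the "two-step" filtrations are mutually compatible so that the shared line is well-defined across all cones — is where the care lies; this is essentially an exercise in the combinatorics of Klyachko's criterion in the rank $2$ situation, using crucially that $n\geq 3$ so that each maximal cone carries at least $n\geq 3$ of the filtrations, leaving no room for the decompositions to disagree.
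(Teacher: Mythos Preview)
Your approach is correct in outline but differs from the paper's, and it has a gap in the reduction step. The paper argues directly: after normalising the filtrations, it shows by two explicit slope computations that if the lines $\C\cdot v_\rho$ take at most two distinct values then some line subspace violates the inequality of Proposition~\ref{prop:stability equiv subsheaves}; hence stability forces at least three distinct lines $F_1,F_2,F_3$. Since $n\geq 3$, the three corresponding rays span a $3$-cone $\sigma\in\Sigma$, Klyachko's criterion visibly fails on $U_\sigma$ (three distinct lines cannot sit in a direct-sum decomposition of $\C^2$), and $\cO(\sigma)\subset\Sing(\cE)$ gives $\dim(\Sing(\cE))\geq n-3$ immediately. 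You instead prove the contrapositive ``locally free $\Rightarrow$ splits $\Rightarrow$ not stable'' --- essentially re-deriving Kaneyama--Klyachko for $r=2$ --- and then try to pass from $\Sing(\cE)\neq\emptyset$ to $\dim(\Sing(\cE))\geq n-3$. Both routes rest on the same dichotomy (at most two lines versus at least three), but the paper's is shorter: two slope calculations replace your splitting/gluing argument, and they pinpoint the singular $3$-orbit directly.

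The gap is in your parenthetical: you assert that $\Sing(\cE)\neq\emptyset$ alone forces $\dim(\Sing(\cE))\geq n-3$, but non-emptiness only guarantees some orbit $\cO(\tau)\subset\Sing(\cE)$ with $\dim\tau\geq 3$, not one with $\dim\tau=3$; a priori the singular locus could consist solely of orbits of strictly smaller dimension. What rescues this in rank $2$ --- and what you must make explicit --- is that Klyachko's criterion is \emph{automatically} satisfied on cones of dimension $\leq 2$ (any two increasing filtrations of $\C^2$ admit a common splitting), so if it fails on some $\tau$ then three rays of $\tau$ carry three pairwise distinct lines, and the $3$-face they span already witnesses the failure. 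That produces a $3$-cone $\tau'$ with $\cO(\tau')\subset\Sing(\cE)$ and closes the gap. Your gluing sketch for the global splitting is also looser than needed; the clean version is simply: since $n\geq 3$, any three of the $n+1$ rays of $\C\P^n$ lie in a common maximal cone, so local freeness on that cone forces their lines to take at most two values, hence at most two distinct lines globally and $\cE$ splits.
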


\begin{proof}
In the proof, we specify to $X=\C\P^n$, but keep all previous notations (e.g. $\Sigma$ will denote the fan of $\C\P^n$).  Consider $(E,E^\rho(\bullet))_{\rho\in\Sigma(1)}$ the family of filtrations associated to $\cE$. Up to an isomorphism, we can assume $E=\C^2$. For each $\rho\in\Sigma(1)$, there exist integers $n_\rho \leq m_\rho$ and a vector $v_\rho\in\C^2$ such that 
 $$
 E^\rho(i)=\left\{
 \begin{array}{ccc}
                  \lbrace 0 \rbrace & \mathrm{ if } & i< n_\rho \\
                  \C\cdot v_\rho & \mathrm{ if } & n_\rho\leq i< m_\rho \\
\C^2 & \mathrm{ if } & m_\rho\leq i .
                  \end{array}
                  \right.
 $$
Up to tensoring $\cE$ by $\cO(\sum_{\rho}n_\rho\, D_\rho)$, we can further assume that $n_\rho=0$ for all $\rho\in\Sigma(1)$, see \cite[Remark 2.2.15]{DDK}. We then have 
$$
 E^\rho(i)=\left\{
 \begin{array}{ccc}
                  \lbrace 0 \rbrace & \mathrm{ if } & i< 0 \\
                  \C\cdot v_\rho & \mathrm{ if } & 0\leq i< m_\rho \\
\C^2 & \mathrm{ if } & m_\rho\leq i .
                  \end{array}
                  \right.
 $$
 If for all $\rho\in\Sigma(1)$, $m_\rho=0$, then $\cE\simeq \cO_{\P^n}\oplus\cO_{\P^n}$, which contradicts stability. Hence, there is at least one line $\C\cdot v_\rho$ appearing in the family of filtrations of $\cE$. We claim now that there must be at least three different such lines. Indeed, if for all $\rho\in\Sigma(1)$ with $m_\rho >0$ we have $\C\cdot v_\rho = F_1$ for a given line $F_1\subset \C^2$, then we have on one hand (we assume $\deg_L(D_\rho)=1$ for all $\rho\in\Sigma(1)$) : 
 $$
 -\mu_L(\cE)=\frac{1}{2}\sum_{m_\rho\neq 0} m_\rho >0 ,
 $$
 while 
 $$
 \sum_{\rho\in\Sigma(1)} \iota_\rho(F_1) = 0,
 $$
 which contradicts stability by Proposition \ref{prop:stability equiv subsheaves}. If there are two different lines $F_1, F_2\subset \C^2$ such that for any $\rho\in\Sigma(1)$ with $m_\rho >0$, $\C\cdot v_\rho = F_1$ or $\C\cdot v_\rho = F_2$, then 
 $$
 -\mu_L(\cE)=\frac{1}{2}\left(\sum_{\C\cdot v_\rho= F_1} m_\rho+\sum_{\C\cdot v_\rho= F_2} m_\rho\right),
 $$
 while
  $$
 \sum_{\rho\in\Sigma(1)} \iota_\rho(F_1) = \sum_{\C\cdot v_\rho= F_2} m_\rho
 $$
 and 
 $$
 \sum_{\rho\in\Sigma(1)} \iota_\rho(F_2) = \sum_{\C\cdot v_\rho= F_1} m_\rho.
 $$
 Again, this contradicts stability. Hence, we can find at least three different lines $F_1, F_2, F_3 \subset \C^2$ together with $(\rho_1, \rho_2, \rho_3)\in(\Sigma(1))^3$ such that for $1\leq i\leq 3$, $\C\cdot v_{\rho_i} = F_i$. As $n\geq 3$, the cone $\sigma:=\sum_{i=1}^3 \rho_i$ belongs to $\Sigma$. Klyachko's criterion for locally freeness cannot be satisfied on $U_\sigma$, as it would imply that $F_1\oplus F_2\oplus F_3 \subset \C^2$, which is absurd. It follows that $\Sing(\cE)\cap U_\sigma \neq \varnothing$. On the other hand, arguing as in the proof of Theorem \ref{theo:intro}, we have that for any face $\tau \subsetneq \sigma$, $\cE$ is locally free on $U_\tau \subset U_\sigma$. We conclude by invariance of the singular locus and the orbit-cone correspondence that $\cO(\sigma)\subset \Sing(\cE)$, and as $\dim(\cO(\sigma))=n-3$, $\dim(\Sing(\cE))\geq n-3$. The result then follows from general theory, the singular locus of a reflexive sheaf on a smooth complex manifold being always of codimension at least $3$.
\end{proof}

\bibliography{lowrank}
\bibliographystyle{amsplain}

\end{document}